\newcommand\+{\mkern1.5mu}
\DeclareMathOperator{\Ima}{Im}
\DeclareMathOperator{\coim}{coim}
\DeclareMathOperator{\Rank}{rank}
\DeclareMathOperator{\Span}{span}
\DeclareMathOperator{\Sign}{sgn}
\theoremstyle{plain}
\newtheorem{theorem}{Theorem}[section]
\newtheorem{lemma}[theorem]{Lemma}
\newtheorem{corollary}[theorem]{Corollary}
\theoremstyle{remark}
\newtheorem{remark}[theorem]{Remark}
\theoremstyle{definition}
\newtheorem{problem}[theorem]{Problem}
\newtheorem{definition}[theorem]{Definition}
\begin{document}
	
\title[The Cauchy problem for constant-rank submanifolds]{The geometric Cauchy problem\\for constant-rank submanifolds}

\author{Matteo Raffaelli}
\address{School of Mathematics, Georgia Institute of Technology, Atlanta, Georgia 30332}
\email{raffaelli@math.gatech.edu}
\date{September 9, 2025}
\subjclass[2020]{Primary 53A07; Secondary 53B25, 58A30}
\keywords{Constant nullity, distributions along submanifolds, index of relative nullity, ruled submanifold, vector cross product}

\begin{abstract}
Given a smooth $s$-dimensional submanifold $S$ of $\mathbb{R}^{m+c}$ and a smooth distribution $\mathcal{D}\supset TS$ of rank $m$ along $S$, we study the following geometric Cauchy problem: to find an $m$-dimensional rank-$s$ submanifold $M$ of $\mathbb{R}^{m+c}$ (that is, an $m$-submanifold with constant index of relative nullity $m-s$) such that $M \supset S$ and $TM |_{S} = \mathcal{D}$. In particular, under some reasonable assumption and using a constructive approach, we show that a solution exists and is unique in a neighborhood of $S$.
\end{abstract}
\maketitle

\section{Introduction and main result}

Given a manifold $Q^{q\geq 3}$ and some class $\mathscr{A}$ of $(m<q)$-dimensional submanifolds, the \textit{geometric Cauchy problem for the class $\mathscr{A}$} is to find all members of $\mathscr{A}$ passing from a given $(s<m)$-dimensional submanifold $S$ of $Q$ with prescribed tangent bundle along $S$. This problem, which represents a far-reaching generalization of the classical Bj\"{o}rling problem for minimal surfaces in $\mathbb{R}^{3}$~\cite[section~22.6]{gray2006}, has recently been examined, always with $s=1$, for several combinations of $Q$ and $\mathscr{A}$; see, e.g., \cite{milan2020, bueno2019, brander2018, dussan2017, cintra2016, martinez2015, chaves2011, brander2013, aledo2009}.

In this paper, we are interested in the geometric Cauchy problem for \emph{constant-rank submanifolds}, that is, smooth embedded submanifolds of $\mathbb{R}^{m+c}$ whose Gauss map has constant rank, or, equivalently, whose second fundamental form has constant index of relative nullity~\cite{chern1952}. These constitute one of the simplest and most natural classes of submanifolds and, for this reason, also one of the most studied; see, e.g., \cite{raffaelli2023, onti2020, dajczer2017, florit2017, dajczer2013, vittone2012, dajczer2009, dajczer2004} and \cite[Chapter~7]{dajczer2019}.

\begin{problem}[Geometric Cauchy problem for constant-rank submanifolds]\label{pr:GCP}
Let $S$ be a connected $s$-dimensional smooth embedded submanifold of $\mathbb{R}^{m+c}$, and let $\mathcal{D} \supset TS$ be a smooth distribution of rank $m$ along $S$, that is, a smooth rank-$m$ subbundle of the ambient tangent bundle over $S$. Find all rank-$s$ submanifolds of $\mathbb{R}^{m+c}$ containing $S$ and whose tangent bundle along $S$ is precisely $\mathcal{D}$.
\end{problem}

Some special cases of Problem~\ref{pr:GCP} have already been considered: the case $m=2$ is classical \cite[pp.~198--200]{docarmo2016}; the case of rank-one hypersurfaces ($s=c=1$) was first addressed by Markina and the author in \cite{markina2019}; and the rank-one case in arbitrary codimension was solved by the author in \cite{raffaelli2024}. The interest in Problem~\ref{pr:GCP} is motivated by the search for new approaches, alternative to the Gauss parametrization~\cite{dajczer1985, sbrana1908}, to describe constant-rank submanifolds. The purpose of this paper is to present a solution of Problem~\ref{pr:GCP} without any assumption on $s$, $m$, or $c$.

To state our main result, let $\mathcal{D}^{\perp}$ be the distribution of rank $c$ whose fiber at $p \in S$ is the orthogonal complement of $\mathcal{D}_{p}$ in $\mathbb{R}^{m+c}$, let $\pi^{\top}$ denote orthogonal projection onto $\mathcal{D}$, and let $\phi_{p}$ be the map
\begin{align*}
T_{p}S \times \mathcal{D}_{p}^{\perp} &\to \mathcal{D}_{p}\\
(v,n) &\mapsto \pi^{\top} \nabla_{v}N, 
\end{align*}
where $\nabla$ is the Euclidean connection, and where $N$ is any extension of $n$ to a local section of $\mathcal{D}^{\perp}$. It is easy to see that $\phi_{p}$ is well defined.

\begin{theorem}\label{thm:GCPSolution}
Suppose that there exists a section $N^{\ast}$ of $\mathcal{D}^{\perp}$ such that the shape operator $A^{\ast} \coloneqq A_{N^{\ast}}$ of $S$ in direction $N^{\ast}$ is nonsingular, i.e., 
\begin{equation*}
\Rank A^{\ast} \rvert_{p} = s \quad \forall \+ p \in S.
\end{equation*}
The geometric Cauchy problem for rank-$s$ submanifolds of $\mathbb{R}^{m+c}$ has a solution if and only if $\phi_{p}(T_{p}S, N^{\ast}_{p}) = \Ima \phi_{p}$ for all $p \in S$. Moreover:
\begin{enumerate}[font=\upshape, label=(\roman*)]
\item\label{item1} The solution is locally unique, i.e., any two solutions coincide on an open set containing $S$.
\item\label{item2} In this open set, the unique solution is given by
\begin{equation*}
\bigl \{ p + x \mid p \in S, \+ x \in\mathcal{D}_{p} \cap ( \Ima \phi_{p})^{\perp}\bigr \},
\end{equation*}
where $^{\perp}$ denotes orthogonal complement in $\mathbb{R}^{m+c}$.
\item \label{item3} Given a local parametrization $\xi \colon A\to \mathbb{R}^{m+c}$ of $S$, the solution can be locally parametrized as follows. Let  $(E_{1}, \dotsc, E_{m})$ be a smooth orthonormal frame for $\mathcal{D}\rvert_{\xi(A)}$ whose first $s$ elements span $TS$. For any $j = 1, \dotsc, m-s$, let $G_{i}$ be the group of permutations of the set $\{ 1,\dotsc, \widehat{i},\dotsc,  s, s+j\}$ obtained by omitting the element $i$, and, denoting by $\phi_{i}^{k}$ the coordinate function $\langle \phi(E_{i}, N^{\ast}), E_{k} \rangle = \langle \nabla_{E_{i}} N^{\ast}, E_{k} \rangle$, let
\begin{equation}\label{eq:Xj}
X_{j} = \sum_{i\in \{1, \dotsc, s, s+j\}} (-1)^{h}\sum _{\lambda \in G_{i}} \Sign (\lambda) \phi_{1}^{\lambda(1)} \dotsm \phi_{s}^{\lambda(s+j)} E_{i},
\end{equation}
where $h =i$ for $i= 1, \dotsc, s$ and $h = s+1$ otherwise. Then
\begin{equation*}
\sigma(a, b^{1}, \dotsc, b^{m-s}) = \xi(a) + b^{1}X_{1}(a) + \dotsb + b^{m-s} X_{m-s}(a)
\end{equation*}
is the desired parametrization.
\end{enumerate}
\end{theorem}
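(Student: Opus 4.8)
The plan is to reduce the whole analysis to the identity
\begin{equation*}
A^{M}_{N}v = -\phi_{p}(v, N), \qquad v \in T_{p}S,
\end{equation*}
valid for the shape operator of any solution $M$ and any local section $N$ of $\mathcal{D}^{\perp}$, which is immediate from $TM\rvert_{S} = \mathcal{D}$. Using it together with the symmetry of $A^{M}_{N}$ and the standing hypothesis $\Rank A^{\ast} = s$, I would first determine the relative nullity distribution $\Delta^{M}$ of an arbitrary solution along $S$. Since the $T_{p}S$-component of $A^{M}_{N^{\ast}}v$ equals $-A^{\ast}v$, the map $v \mapsto A^{M}_{N^{\ast}}v$ is injective on $T_{p}S$; hence $\phi_{p}(T_{p}S, N^{\ast}_{p})$ is $s$-dimensional and $\Delta^{M}_{p}\cap T_{p}S = \{0\}$, and as $\dim \Delta^{M}_{p} = m-s$ we get $\mathcal{D}_{p} = T_{p}S \oplus \Delta^{M}_{p}$. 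Moreover, for $x \in \Delta^{M}_{p}$ and every $N$ the symmetry of the shape operator gives $\langle x, \phi_{p}(v,N)\rangle = -\langle A^{M}_{N}x, v\rangle = 0$, so that $\Delta^{M}_{p} \subseteq \mathcal{D}_{p}\cap(\Ima\phi_{p})^{\perp}$. Comparing dimensions,
\begin{equation*}
m - s = \dim \Delta^{M}_{p} \leq \dim \bigl(\mathcal{D}_{p}\cap(\Ima \phi_{p})^{\perp}\bigr) = m - \dim \Ima \phi_{p} \leq m - \dim \phi_{p}(T_{p}S, N^{\ast}_{p}) = m - s,
\end{equation*}
so every inequality is an equality: the displayed inclusion is an equality, which proves \ref{item2}, and $\Ima\phi_{p} = \phi_{p}(T_{p}S, N^{\ast}_{p})$, which proves the necessity of the compatibility condition.

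Uniqueness \ref{item1} then follows from the structure theory of constant-rank submanifolds: the relative nullity distribution of any solution is integrable, and its leaves are open subsets of affine subspaces of $\mathbb{R}^{m+c}$. By the previous paragraph the leaf through $p \in S$ is a neighborhood of $p$ in $p + \Delta_{p}$, where $\Delta_{p} \coloneqq \mathcal{D}_{p}\cap(\Ima\phi_{p})^{\perp}$ is determined by the Cauchy data alone; hence any two solutions contain the same rulings and therefore agree on an open set containing $S$.

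For sufficiency I would run the construction in reverse. Assuming $\phi_{p}(T_{p}S, N^{\ast}_{p}) = \Ima\phi_{p}$, the injectivity of $A^{\ast}$ gives $\dim \Ima\phi_{p} = s$, so $\Delta \coloneqq \mathcal{D}\cap(\Ima\phi)^{\perp}$ is a smooth rank-$(m-s)$ distribution transverse to $TS$, and with $(X_{1},\dotsc,X_{m-s})$ a frame for $\Delta$ the map $\sigma(a,b) = \xi(a) + \sum_{j} b^{j}X_{j}(a)$ is an immersion near $b = 0$ whose image $M$ contains $S$ and satisfies $T_{p}M = \mathcal{D}_{p}$ along $S$. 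The decisive point is that the tangent plane is constant along each ruling: for $v \in T_{p}S$ and any unit section $\eta$ of $\mathcal{D}^{\perp}$, differentiating $\langle X_{j}, \eta\rangle = 0$ yields $\langle \nabla_{v}X_{j}, \eta\rangle = -\langle X_{j}, \nabla_{v}\eta\rangle = -\langle X_{j}, \phi_{p}(v,\eta)\rangle = 0$, since $X_{j} \in \Delta_{p} \perp \Ima\phi_{p}$; hence $\nabla_{v}X_{j} \in \mathcal{D}$. As the mixed derivatives satisfy $\partial_{a^{\alpha}}\partial_{b^{j}}\sigma = \nabla_{\partial_{a^{\alpha}}\xi}X_{j} \in \mathcal{D}$ and $\partial_{b^{k}}\partial_{b^{j}}\sigma = 0$, it follows that $(T_{\sigma(a,b)}M)^{\perp} = \mathcal{D}^{\perp}_{\xi(a)}$ for all $b$. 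Thus the Gauss map factors as $\sigma(a,b) \mapsto \mathcal{D}_{\xi(a)}$ through the projection $(a,b) \mapsto a$; its differential annihilates the ruling directions and has rank $s$, since a direction $v \in T_{p}S$ along which $\mathcal{D}$ were infinitesimally stationary would satisfy $\phi_{p}(v,\cdot) = 0$, hence $A^{\ast}v = 0$ and $v = 0$. Since the index of relative nullity is the dimension of the kernel of the Gauss map, $M$ is a rank-$s$ submanifold solving the problem.

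Finally, for \ref{item3} I would verify that \eqref{eq:Xj} produces an explicit frame for $\Delta$. Here one recognizes $X_{j}$ as the generalized cross product of the $s$ vectors $\phi(E_{i}, N^{\ast}) = \sum_{k} \phi_{i}^{k} E_{k}$ inside the $(s+1)$-dimensional space $\Span(E_{1},\dotsc,E_{s},E_{s+j})$: the cofactor expansion shows that $\langle X_{j}, \phi(E_{l}, N^{\ast})\rangle$ is a determinant with two equal rows, hence vanishes, so $X_{j} \in \mathcal{D}\cap(\Ima\phi)^{\perp} = \Delta$; and the coefficient of $E_{s+j}$ in $X_{j}$ is the $s\times s$ minor of the $\phi_{i}^{k}$ with $k \leq s$, equal to $\pm\det A^{\ast} \neq 0$, so the $X_{j}$ are linearly independent and span $\Delta$. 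The main obstacle throughout is precisely the sufficiency step: one must ensure that the algebraically defined rulings $p + \Delta_{p}$ assemble into a genuine constant-rank submanifold on a full neighborhood of $S$, and this is exactly what the constancy of the tangent plane along rulings — controlled by $X_{j} \perp \Ima\phi_{p}$ — and the nonsingularity of $A^{\ast}$ together guarantee.
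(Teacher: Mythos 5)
Your proposal is correct and takes essentially the same route as the paper: both arguments reduce the problem to ruled submanifolds whose ruling distribution must be $\mathcal{D}\cap(\Ima\phi)^{\perp}$, use the nonsingularity of $A^{\ast}$ both for transversality to $TS$ and for the dimension count forcing $\Ima\phi_{p}=\phi_{p}(T_{p}S,N^{\ast}_{p})$, and realize the frame in \ref{item3} as a generalized cross product computed by cofactor expansion. The differences are only presentational --- you verify the constancy of the tangent plane along rulings and the rank of the Gauss map directly where the paper packages these steps into Lemmas~\ref{lm:construction}, \ref{lm:criterion} and a parallel-translated shape operator, and you check $X_{j}\perp\Ima\phi$ via the repeated-row determinant rather than via Lemma~\ref{lm:coim} --- so no substantive comparison is needed.
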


\begin{remark}
$A^\ast = \pi_{S} \circ \phi (\cdot, N^\ast)$, where $\pi_S$ denotes orthogonal projection onto $TS$.
\end{remark}

\begin{remark}
The section $N^{\ast}$ does not need to be global. The theorem remains valid if one merely assumes the existence of a local section $N^{\ast}(p)$ in a neighborhood $U_{p}$ of each point $p\in S$, along with the compatibility condition
\begin{equation*}
N^{\ast}(p) = \pm N^{\ast}(q) \quad \forall \+ p,q \text{ such that }U_{p}\cap U_{q} \neq \emptyset.
\end{equation*}
\end{remark}

\begin{remark}
If the distribution $\mathcal{D}$ is obtained by restricting to $S$ the tangent bundle of an arbitrary submanifold $M \supset S$, then the map $\phi_p(\cdot, n)$ coincides with the restriction to $T_pS$ of the shape operator of $M$ with respect to $n$. In this case the theorem provides conditions for the existence of a constant-rank, first-order approximation of $M$ along $S$; cf.~\cite[section~6]{raffaelli2024}.
\end{remark}

\begin{remark}
When $s =1$, formula~\eqref{eq:Xj} reduces to $X_{j} = \phi^{1}E_{1+j} -\phi^{1+j}E_{1}$, and we thus retrieve the third item in \cite[Theorem~1.3]{raffaelli2024}. The formula is new starting from $s=2$, when it reduces to $X_{j} = -(\phi_{1}^{2}\phi_{2}^{2+j} - \phi_{1}^{2+j}\phi_{2}^{2})E_{1} + (\phi_{1}^{1}\phi_{2}^{2+j} - \phi_{1}^{2+j}\phi_{2}^{1})E_{2} - (\phi_{1}^{1}\phi_{2}^{2} - \phi_{1}^{2}\phi_{2}^{1})E_{j+2}$. 
\end{remark}

Clearly, when $c=1$, the condition on $\phi_{p}$ is automatically satisfied, and Problem~\ref{pr:GCP} becomes generically well posed.

\begin{corollary}\label{cor:GCPSolution}
Suppose that $c = 1$. If for each $p\in S$ the shape operator of $S$ in direction $n\in\mathcal{D}_p^{\perp}$ is nonsingular, then the geometric Cauchy problem for rank-$s$ hypersurfaces of $\mathbb{R}^{m+1}$ has a solution.
\end{corollary}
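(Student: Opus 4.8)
The plan is to obtain the corollary as an immediate specialization of Theorem~\ref{thm:GCPSolution}: it suffices to verify that, when $c=1$, the existence criterion $\phi_{p}(T_{p}S, N^{\ast}_{p}) = \Ima \phi_{p}$ holds automatically at every $p\in S$, so that the ``if'' direction of the theorem applies.

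First I would record that $\phi_{p}$ is bilinear. Linearity in the tangential argument is immediate from the definition, while linearity in the normal argument follows because, given local sections $N_{1},N_{2}$ of $\mathcal{D}^{\perp}$ extending $n_{1},n_{2}$, the sections $N_{1}+N_{2}$ and $\alpha N_{1}$ extend $n_{1}+n_{2}$ and $\alpha n_{1}$, and both $\nabla$ and $\pi^{\top}$ are $\mathbb{R}$-linear. In particular $\phi_{p}(T_{p}S, N^{\ast}_{p})$ is a linear subspace of $\mathcal{D}_{p}$.

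Next I would observe that the hypothesis forces $N^{\ast}$ to be nowhere vanishing: since $\Rank A^{\ast}\rvert_{p}=s\geq 1$, the operator $A^{\ast}\rvert_{p}$ is nonzero, and hence so is $N^{\ast}_{p}$. As $c=1$, the fiber $\mathcal{D}^{\perp}_{p}$ is one-dimensional, so it is spanned by $N^{\ast}_{p}$. Combining this with bilinearity, every element of $\Ima\phi_{p}=\phi_{p}(T_{p}S,\mathcal{D}^{\perp}_{p})$ has the form $\phi_{p}(v,\alpha N^{\ast}_{p})=\phi_{p}(\alpha v,N^{\ast}_{p})$ for some $v\in T_{p}S$ and $\alpha\in\mathbb{R}$, whence
\[
\Ima \phi_{p}=\phi_{p}(T_{p}S,\mathcal{D}^{\perp}_{p})=\phi_{p}(T_{p}S,N^{\ast}_{p}).
\]
The criterion of Theorem~\ref{thm:GCPSolution} is therefore satisfied for every $p\in S$, and the existence of a solution follows.

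Since the whole argument is a direct specialization of the main theorem, I expect no genuine obstacle; the only step meriting care is the nonvanishing of $N^{\ast}$, which is precisely where the nonsingularity assumption on $A^{\ast}$ enters and shows that the hypothesis is not merely decorative.
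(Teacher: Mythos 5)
Your proposal is correct and follows exactly the route the paper intends: the paper dispatches this corollary with the one-line remark that for $c=1$ the condition $\phi_{p}(T_{p}S,N^{\ast}_{p})=\Ima\phi_{p}$ is automatic, and your argument (bilinearity of $\phi_{p}$, nonvanishing of $N^{\ast}$ forced by $\Rank A^{\ast}=s$, hence $\mathcal{D}^{\perp}_{p}=\Span(N^{\ast}_{p})$) is just a careful justification of that remark before invoking Theorem~\ref{thm:GCPSolution}.
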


The proof of Theorem~\ref{thm:GCPSolution} will be given in section~\ref{sec:proof}. It relies on the well-known fact that any $m$-dimensional rank-$s$ submanifold of $\mathbb{R}^{m+c}$ admits a foliation by totally geodesic leaves of dimension $m-s$, along which the tangent space is constant; the converse holds provided that the foliation is unique. In particular, as shown in the next section, the tangent space is constant along the rulings precisely when the ambient covariant derivative of any tangent vector field along $S$ has vanishing normal component (Lemma~\ref{lm:criterion}). This extends a classical result about developable surfaces in $\mathbb{R}^{3}$~\cite[Theorem~54.1]{kreyszig1968}; see also \cite{yano1944}.


\section{Constant-rank submanifolds}

The purpose of this section is to generalize \cite[Lemma~3.7]{raffaelli2024} to \emph{$(m-s)$-ruled} submanifolds. 

\begin{definition}
Let $M$ be an $m$-dimensional embedded submanifold of $\mathbb{R}^{m+c}$. We say that $M$ is \textit{$(m-s)$-ruled} if it admits a foliation by $(m-s)$-dimensional totally geodesic submanifolds of $\mathbb{R}^{m+c}$, called \textit{rulings}. In particular, we say that $M$ is \textit{properly $(m-s)$-ruled} when the foliation is unique.
\end{definition}

Given a $s$-dimensional embedded submanifold $S$ of $\mathbb{R}^{m+c}$, the following lemma is the key to extending $S$ to an $(m-s)$-ruled submanifold. Recall that two embedded submanifolds $S,S'$ of a smooth manifold are said to \textit{intersect transversely} if for each $p\in S\cap S'$, the tangent spaces $T_{p}S$ and $T_{p}S'$ together span the ambient tangent space. 

\begin{lemma}\label{lm:construction}
Let $\mathcal{F}$ be a distribution of rank $m-s$ along $S$ such that
\begin{equation*}
\dim \mleft(T_{p}S \oplus \mathcal{F}_{p} \mright)= m \quad \forall \+ p \in S.
\end{equation*}
Then the subset
\begin{equation*}
\{ p+x \mid p\in S, \+ x \in \mathcal{F}_{p} \}
\end{equation*}
defines an $(m-s)$-ruled submanifold $M_{\mathcal{F}}$ in a neighborhood of $S$. Conversely, any $(m-s)$-ruled submanifold containing $S$ and whose rulings intersect $S$ transversely admits a description of this form.
\end{lemma}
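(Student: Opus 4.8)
The plan is to treat both directions through the natural parametrization of a ruled submanifold by the total space of $\mathcal{F}$. View $\mathcal{F}$ as a smooth rank-$(m-s)$ vector bundle over $S$ whose fiber over $p$ is $\mathcal{F}_{p}\subset\mathbb{R}^{m+c}$, and define $\Phi(p,x)=p+x$. First I would verify that $\Phi$ is an immersion along the zero section: identifying $T_{(p,0)}\mathcal{F}\cong T_{p}S\oplus\mathcal{F}_{p}$, one has $d\Phi_{(p,0)}(v,w)=v+w$, whose image is $T_{p}S+\mathcal{F}_{p}$. The hypothesis $\dim(T_{p}S\oplus\mathcal{F}_{p})=m$ says precisely that this image is $m$-dimensional, so $d\Phi_{(p,0)}$ is injective; since maximal rank is an open condition, $\Phi$ is an immersion on an open neighborhood $\mathcal{O}_{0}$ of the zero section. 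Because $\Phi$ restricts to the embedding $S\hookrightarrow\mathbb{R}^{m+c}$ on the zero section, a standard argument (the same local injectivity estimate used in the tubular neighborhood theorem) upgrades this to an embedding on a possibly smaller neighborhood $\mathcal{O}\subseteq\mathcal{O}_{0}$. I would then set $M_{\mathcal{F}}=\Phi(\mathcal{O})$, an embedded $m$-submanifold containing $S$; for each fixed $p$, the slice $\Phi(\{p\}\times(\mathcal{F}_{p}\cap\mathcal{O}_{p}))$ is an open piece of the affine subspace $p+\mathcal{F}_{p}$, hence totally geodesic in $\mathbb{R}^{m+c}$, and these pieces foliate $M_{\mathcal{F}}$. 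This exhibits $M_{\mathcal{F}}$ as $(m-s)$-ruled.

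For the converse, let $M\supset S$ be $(m-s)$-ruled with rulings transverse to $S$. Each ruling is a connected totally geodesic submanifold of $\mathbb{R}^{m+c}$, hence an open subset of an affine subspace, so a ruling $L$ through $p$ satisfies $L\subset p+T_{p}L$ with $T_{p}L$ independent of the chosen point of $L$. Since the rulings partition $M$, through each $p\in S$ there passes a unique ruling $L_{p}$, and I would define $\mathcal{F}_{p}=T_{p}L_{p}$; smoothness of the foliation makes $\mathcal{F}$ a smooth rank-$(m-s)$ distribution along $S$. Transversality reads $T_{p}S+\mathcal{F}_{p}=T_{p}M$, which by the dimension count $s+(m-s)=m=\dim T_{p}M$ forces the sum to be direct and $m$-dimensional; thus $\mathcal{F}$ satisfies the hypothesis of the lemma. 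It remains to identify $M$ near $S$ with $\{p+x\mid p\in S,\ x\in\mathcal{F}_{p}\}$: around any $p_{0}\in S$ I would pass to a foliation chart for the rulings, in which transversality (the leaves and $S$ having complementary dimensions in $M$) makes $S$ a section of the product, so that every leaf near $p_{0}$ meets $S$ in exactly one point and a neighborhood of $p_{0}$ in $M$ is the union of the ruling-pieces $L_{p}\subset p+\mathcal{F}_{p}$ through points of $S$. This union is exactly $M_{\mathcal{F}}$ near $S$.

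The hard part will be the two passages from pointwise or infinitesimal data to a genuine neighborhood statement, since $S$ need not be compact and a priori distinct rulings could intersect. In the forward direction this is the upgrade from ``immersion near the zero section'' to ``embedding on a neighborhood,'' which I would obtain locally from the local immersion theorem and then globalize, using that the conclusion is only asserted near $S$. In the converse it is the local product structure: here transversality is exactly what guarantees, via the foliation chart, that nearby leaves each cross $S$ once, preventing the rulings through $S$ from missing part of $M$ or overlapping. Everything else---affineness of totally geodesic rulings, smoothness and rank of $\mathcal{F}$, and the direct-sum dimension count---is routine.
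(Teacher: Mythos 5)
Your forward direction is, at its core, the same construction as the paper's: you both parametrize the candidate submanifold by base point plus fiber vector, check linear independence of the differentials along the zero section, and then shrink to obtain an embedding. The difference is organizational: you work globally on the total space of $\mathcal{F}$ with the single map $\Phi(p,x)=p+x$, whereas the paper writes $\sigma(a,b)=\xi(a)+\sum_j b^jX_j(a)$ in local parametrizations and frames and then glues the local pieces over a cover, invoking the rank theorem and the local slice criterion. Your global formulation is cleaner and avoids the gluing step, but it concentrates all the difficulty in the passage from ``injective immersion near the zero section'' to ``embedding'': note that here $\Phi$ is an immersion into a \emph{higher-dimensional} ambient space, not a local diffeomorphism as in the tubular neighborhood theorem, so injectivity plus immersion does not by itself give the subspace topology on the image; one still needs the slice-type condition that the paper arranges explicitly (shrinking so that $\sigma(U_a)=V_a\cap\sigma(A\times B)$, using that $S$ is embedded). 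You flag this as the hard part and point at the right tool, so I regard it as a thin spot rather than a gap, but you should spell out why the image is locally closed. On the converse, you actually do more than the paper: its written proof only treats the construction of $M_{\mathcal{F}}$ from $\mathcal{F}$ and never addresses the converse, while your argument --- connected totally geodesic submanifolds of $\mathbb{R}^{m+c}$ are open pieces of affine subspaces, $\mathcal{F}_p\coloneqq T_pL_p$ is a smooth distribution, and transversality in $M$ plus a foliation chart shows each nearby leaf meets $S$ exactly once --- is correct and fills that omission, provided you read ``intersect transversely'' as transversality inside $M$ (which is forced by the dimension count $s+(m-s)=m$).
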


\begin{proof}
We first prove a local version of the lemma. Let $\xi \colon A \to \mathbb{R}^{m+c}$ be a local parametrization of $S$, let $(X_{1}, \dotsc, X_{m-s})$ be a local frame for $\mathcal{F}$ over $A$, and, for $B \subset \mathbb{R}^{m-s}$ open, define $\sigma \colon A \times B \to \mathbb{R}^{m+c}$ by
\begin{equation*}
\sigma(a,b^{1}, \dotsc, b^{m-s}) = \xi(a) + b^{1} X_{1}(a) + \dotsb + b^{m-s} X_{m-s}(a).
\end{equation*}
Since the partial derivatives of $\sigma$ are linearly independent at $(a, 0)$, they remain linearly independent in some neighborhood $U_{a}$ of $(a,0)$ in $A \times B$. Moreover, after shrinking $U_{a}$ and $B$ if necessary, we may assume that $\sigma(U_{a})$ is embedded and there exists a neighborhood $V_{a}$ of $\xi(A)$ in $\mathbb{R}^{m+c}$ such that 
\begin{equation*}
\sigma(U_{a}) = V_{a} \cap \sigma(A \times B);
\end{equation*}
to see this, note that otherwise, by making $B$ arbitrarily close to $0$, the assumption that $S$ is embedded would be violated. Let $M\coloneqq \cup_{a \in A} U_{a}$, and let $q \in M$. Since $M$ is an immersed submanifold, the rank theorem~\cite[Theorem~4.12]{lee2013} implies the existence of a slice chart for $M$ in $\mathbb{R}^{m+c}$ around $q$. Hence $M$ satisfies the local slice criterion for embedded submanifolds~\cite[Theorem~5.8]{lee2013}, which is the desired local conclusion.

To prove the full statement, let $\{\xi_{i} \colon A_{i} \to \mathbb{R}^{m+c}\}_{i=1}^{r}$ be a set of local parametrizations of $S$ covering $S$, and let $\{\sigma_{i} \colon A_{i} \times B_{i} \to \mathbb{R}\}_{i=1}^{r}$ be the corresponding maps $\sigma$, as defined earlier. Then we know that for each $i$ there exists a neighborhood $U_{i}$ of $(A_{i},0)$ in $A_{i} \times B_{i}$ such that $\sigma_{i}(U_{i})$ is embedded. Shrinking $U_{1}, \dotsc, U_{r}$ along the second factor if necessary, we may assume that there are neighborhoods $V_{1},\dotsc, V_{r}$ of $\xi_{1}(A_{1}), \dotsc, \xi_{r}(A_{r})$ in $\mathbb{R}^{m+c}$ such that
\begin{equation*}
\sigma_{i}(U_{i}) = V_{i} \cap (\cup_{i=1}^{r} \sigma_{i}(U_{i})) \quad \forall \+ i =1, \dotsc, r.
\end{equation*}
As before, this implies the existence of a slice chart for $M_{i} =\sigma_{i}(U_{i})$ in $\mathbb{R}^{m+c}$ around every $q \in M_{i}$. Hence $M \coloneqq \cup_{i}M_{i}$ is the desired $(m-s)$-ruled submanifold.
\end{proof}

Next we define constant-rank submanifolds and give two alternative characterizations of them.

\begin{definition}
Let $M$ be an $m$-dimensional embedded submanifold of $\mathbb{R}^{m+c}$.
The \textit{index of relative nullity} of $M$ at a point $p \in M$ is the dimension of the kernel $\Delta$ of the second fundamental form $\alpha$ at $p$:
\begin{equation*}
\Delta = \{x \in T_{p}M \mid \alpha(x,\cdot) =0 \}.
\end{equation*}
We say that $M$ is a \textit{constant-rank} submanifold if the index of relative nullity is constant. In particular, we say that $M$ is \textit{rank-$s$} when the index is equal to $m-s$.
\end{definition}

\begin{theorem}[{\cite[Lemma~3.1]{hartman1965}}]
The following statements are equivalent:
\begin{enumerate}[font=\upshape]
\item $M$ is rank-$s$.
\item The Gauss map $M\to Gr(m,m+c)$ has rank $s$ everywhere; here $Gr(m,m+c)$ denotes the Grassmannian of $m$-dimensional subspaces of $\mathbb{R}^{m+c}$.
\item $M$ is properly $(m-s)$-ruled with constant tangent space along the $(m-s)$-rulings.
\end{enumerate}
\end{theorem}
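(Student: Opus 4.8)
The plan is to route everything through the differential of the Gauss map $\gamma\colon M\to Gr(m,m+c)$, $\gamma(p)=T_{p}M$, and the relative nullity distribution $p\mapsto \Delta_{p}$. Using the canonical identification $T_{V}Gr(m,m+c)\cong \operatorname{Hom}(V,V^{\perp})$, I would first record the classical formula $d\gamma_{p}(x)=\alpha(x,\cdot)$: differentiating a local tangent frame $(e_{1},\dotsc,e_{m})$ along a curve with velocity $x$, the infinitesimal motion of the plane $T_{p}M$ is encoded by the normal parts $(\nabla_{x}e_{i})^{\perp}=\alpha(x,e_{i})$. Hence $\ker d\gamma_{p}=\{x\in T_{p}M:\alpha(x,\cdot)=0\}=\Delta_{p}$ and $\Rank d\gamma_{p}=m-\dim\Delta_{p}$, so the index of relative nullity equals $m-s$ everywhere precisely when $\gamma$ has rank $s$ everywhere. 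This settles the equivalence of the first two statements.

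Next I would prove that the first two statements imply the third. Under constant rank $s$, the bundle map $\alpha\colon TM\to\operatorname{Hom}(TM,NM)$ has constant rank, so $\Delta=\ker\alpha$ is a smooth subbundle of rank $m-s$. Its involutivity and the total geodesy of its leaves both follow from the Codazzi equation $(\nabla^{\perp}_{X}\alpha)(Y,Z)=(\nabla^{\perp}_{Y}\alpha)(X,Z)$: for $T\in\Delta$ and arbitrary $X,Y$, the identity $\alpha(T,\cdot)=0$ reduces Codazzi to $\alpha(\nabla^{M}_{X}T,Y)=-(\nabla^{\perp}_{T}\alpha)(X,Y)$, whose symmetry in $X,Y$ combined with $\alpha(T',\nabla^{M}_{Y}T)=0$ yields $\nabla^{M}_{T'}T\in\Delta$ for $T,T'\in\Delta$. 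Thus the leaves are totally geodesic in $M$, and since $\alpha$ vanishes on $\Delta\times\Delta$ they are totally geodesic in $\mathbb{R}^{m+c}$ as well, hence open subsets of affine $(m-s)$-planes; as $d\gamma$ vanishes on $\Delta$, the map $\gamma$ is constant on each connected leaf, so $T_{p}M$ is constant along the rulings. That this relative nullity foliation is the \emph{unique} $(m-s)$-ruled foliation is the first delicate point: the leaves of any $(m-s)$-ruled foliation are affine, so the splitting of their ambient second fundamental form into the ($M$-tangent) and ($M$-normal) parts, which are orthogonal, forces both to vanish, whence $\alpha|_{TL'\times TL'}=0$; one must then show, using that $\alpha$ has constant rank $s$, that such a totally null totally geodesic leaf is forced to be a relative nullity leaf, so that the foliation coincides with that of $\Delta$ and $M$ is properly $(m-s)$-ruled.

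For the converse I would let $\mathcal{R}$ be the rank-$(m-s)$ distribution tangent to the rulings. Constancy of the tangent space along rulings gives $\gamma$ constant on each leaf, hence $\mathcal{R}\subseteq\ker d\gamma=\Delta$ and $\Rank d\gamma\leq s$ everywhere. The opposite inequality is the main obstacle, and it is exactly where the hypothesis \emph{properly} is indispensable. The index of relative nullity is only upper semicontinuous, so it attains its minimum on an open dense set $U_{0}$; if this minimum exceeded $m-s$, then on $U_{0}$ each totally geodesic leaf of $\Delta$ would be a flat of dimension $>m-s$ along which $T_{p}M$ is constant, and such a flat can be sub-foliated by parallel $(m-s)$-planes in more than one way, producing distinct $(m-s)$-ruled foliations and contradicting uniqueness. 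Hence $\Rank d\gamma=s$ on $U_{0}$, and one then has to propagate this equality to all of $M$, ruling out nullity jumps on the (a priori possible) lower-dimensional exceptional set. This last bookkeeping, together with the uniqueness argument of the previous paragraph, is where I expect the real work to lie; the remaining implications are, by comparison, formal consequences of the identity $\ker d\gamma=\Delta$.
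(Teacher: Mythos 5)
The paper offers no proof of this statement---it is quoted verbatim from Hartman's Lemma~3.1---so there is nothing internal to compare against and your attempt must stand on its own. The parts you actually carry out are correct and standard: the identification $d\gamma_{p}(x)=\alpha(x,\cdot)$ under $T_{V}Gr(m,m+c)\cong\operatorname{Hom}(V,V^{\perp})$ gives $\ker d\gamma=\Delta$ and hence the equivalence of (1) and (2); and the Codazzi computation correctly shows that, under constant rank, $\Delta$ is a smooth autoparallel distribution whose leaves are open pieces of affine $(m-s)$-planes along which $T_{p}M$ is constant. That is the existence half of (1)~$\Rightarrow$~(3).

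The two steps you explicitly defer are not bookkeeping, and the routes you sketch for them fail. For uniqueness in (1)~$\Rightarrow$~(3): it is \emph{not} true that a totally geodesic $(m-s)$-leaf $L'$ with $\alpha|_{TL'\times TL'}=0$ must be a nullity leaf. The rank-$2$ submanifold $M=\{(x,y,xy,w)\}\subset\mathbb{R}^{4}$ has nullity lines in direction $(0,0,0,1)$, but the integral curves of the tangent field $(1,0,y,1)$ are also straight lines foliating $M$; so a rank-$s$ submanifold need not be properly $(m-s)$-ruled in the literal sense of the paper's definition. Uniqueness holds only among rulings along which the tangent space is constant, and under that reading it is immediate (such a leaf has $TL\subset\ker d\gamma=\Delta$, hence $TL=\Delta$ by dimension count)---no appeal to ``constant rank forces totally null leaves to be nullity leaves'' is available or needed. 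For (3)~$\Rightarrow$~(1): your plan to establish $\Rank d\gamma=s$ on a dense open set and then ``propagate'' across the exceptional set cannot succeed in general. The cylinder $\{(t,t^{3},w)\}\subset\mathbb{R}^{3}$ is foliated by vertical lines with constant tangent plane along them, and since the cubic contains no segment this is its \emph{only} foliation by lines; yet its index of relative nullity jumps from $1$ to $2$ over $t=0$. So either the hypotheses in (3) must be read more restrictively than you (and, arguably, the quoted statement) read them, or a genuinely different argument is required; identifying which is the real content of this direction. Note finally that the paper never relies on (3)~$\Rightarrow$~(1): in the proof of Theorem~\ref{thm:GCPSolution} the constancy of the nullity index of $M^{\ast}$ is verified directly, by extending $N^{\ast}$ by ambient parallel translation.
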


Thus, rank-$s$ submanifolds containing $S$ are locally in one-to-one correspondence with distributions $\mathcal{F}$ of rank $m-s$ along $S$ making $M_{\mathcal F}$ properly ruled and satisfy
\begin{equation*}
 T_{p}M_{\mathcal{F}} = T_{p+x}M_{\mathcal{F}} \quad \forall \+ x \in \mathcal{F}_{p}\text{ such that } p+x \in M_{\mathcal{F}}.
\end{equation*}
The next lemma expresses the last condition in terms of the ambient covariant derivative along $S$ and represents the sought generalization of \cite[Lemma~3.7]{raffaelli2024}.

\begin{lemma}\label{lm:criterion}
The tangent space of $M_{\mathcal{F}}$ is constant along $x \in\mathcal{F}_{p}$ if and only if
\begin{equation*}
\nabla_{v}X \in T_{p}M_{\mathcal{F}} \quad \forall \+ v \in T_{p}S,
\end{equation*}
where $X$ is any extension of $x$ to a local section of $\mathcal{F}$.
\end{lemma}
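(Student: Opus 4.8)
The plan is to work in the explicit parametrization $\sigma$ introduced in the proof of Lemma~\ref{lm:construction} and to read off both conditions directly from its partial derivatives. Fix a local parametrization $\xi \colon A \to \mathbb{R}^{m+c}$ of $S$ with $p = \xi(a)$, a local frame $(X_{1}, \dotsc, X_{m-s})$ for $\mathcal{F}$, and write $x = \sum_{j} b^{j} X_{j}(a) \in \mathcal{F}_{p}$, so that the point $p + x$ equals $\sigma(a,b)$. Denoting by $v_{\alpha} \coloneqq \partial \xi / \partial a^{\alpha}$ the coordinate frame of $TS$, and recalling that in Euclidean space $\nabla_{v_{\alpha}} X_{j}$ is just the componentwise derivative $\partial X_{j} / \partial a^{\alpha}$, the partial derivatives of $\sigma$ at $(a,b)$ are
\begin{equation*}
\frac{\partial \sigma}{\partial a^{\alpha}} = v_{\alpha} + \sum_{j} b^{j} \+ \nabla_{v_{\alpha}} X_{j}, \qquad \frac{\partial \sigma}{\partial b^{j}} = X_{j}(a).
\end{equation*}

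First I would record the two tangent spaces. At $b = 0$ we have $T_{p} M_{\mathcal{F}} = T_{p}S \oplus \mathcal{F}_{p}$, while at the point $p + x = \sigma(a,b)$ the tangent space is spanned by $\mathcal{F}_{p}$ together with the vectors $v_{\alpha} + \nabla_{v_{\alpha}} X$, where $X \coloneqq \sum_{j} b^{j} X_{j}$ is the extension of $x$ with constant coefficients. Both spaces have dimension $m$ inside the neighborhood $U_{a}$ where the partials of $\sigma$ are independent (Lemma~\ref{lm:construction}), and both contain $\mathcal{F}_{p}$.

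The crux is then a dimension count modulo $\mathcal{F}_{p}$. Since $T_{p+x} M_{\mathcal{F}}$ and $T_{p} M_{\mathcal{F}}$ share the $(m-s)$-dimensional subspace $\mathcal{F}_{p}$ and both have dimension $m$, they coincide if and only if each generator $v_{\alpha} + \nabla_{v_{\alpha}} X$ lies in $T_{p} M_{\mathcal{F}}$; as $v_{\alpha} \in T_{p}S \subset T_{p} M_{\mathcal{F}}$, this happens exactly when $\nabla_{v_{\alpha}} X \in T_{p} M_{\mathcal{F}}$ for every $\alpha$, i.e., when $\nabla_{v} X \in T_{p} M_{\mathcal{F}}$ for all $v \in T_{p}S$. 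Because $\sigma$ is affine in $b$, the term $\sum_{j} b^{j} \+ \nabla_{v_{\alpha}} X_{j}$ is literally $\nabla_{v_{\alpha}} X$ evaluated at $p$, with no higher-order correction, so the equivalence is exact rather than merely infinitesimal; replacing $x$ by $tx$ multiplies $\nabla_{v} X$ by $t$, so constancy at the single point $p + x$ is equivalent to constancy along the whole ruling, matching either reading of \emph{constant along $x$}.

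Finally I would verify that the condition does not depend on the chosen extension, which is the only real subtlety. If $X$ and $\tilde{X}$ are two sections of $\mathcal{F}$ both equal to $x$ at $p$, their difference $Y = X - \tilde{X}$ is a section of $\mathcal{F}$ vanishing at $p$; writing $Y = \sum_{j} d^{j} X_{j}$ with $d^{j}(p) = 0$ gives $\nabla_{v} Y = \sum_{j} (v d^{j}) X_{j} + \sum_{j} d^{j} \+ \nabla_{v} X_{j}$, whose second sum vanishes at $p$ and whose first sum lies in $\mathcal{F}_{p} \subseteq T_{p} M_{\mathcal{F}}$. Hence $\nabla_{v} X \in T_{p} M_{\mathcal{F}}$ if and only if $\nabla_{v} \tilde{X} \in T_{p} M_{\mathcal{F}}$, so the statement is well posed for any section-extension. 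The main obstacle is precisely keeping the vertical directions $\mathcal{F}_{p}$ and the horizontal derivative $\nabla_{v} X$ cleanly separated, and making sure that the affineness in $b$ upgrades the first-order identity to an honest equality of tangent spaces.
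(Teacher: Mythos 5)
Your proof is correct and follows essentially the same route as the paper: both work with the ruled parametrization $\sigma(a,b)=\xi(a)+\sum_j b^jX_j(a)$, read off the tangent space at $p+x$ from its partial derivatives, and reduce the equality of the two $m$-dimensional tangent spaces (which share $\mathcal{F}_p$) to the condition $\nabla_{v}X\in T_pM_{\mathcal{F}}$. Your explicit check that the condition is independent of the chosen extension of $x$ is a welcome addition; the paper only addresses this point in a remark following the lemma.
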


\begin{proof}
Let $\xi \colon A \to \mathbb{R}^{m+c}$ be a local parametrization of $S$ around $p$, and let $(X_{j})_{j=1}^{m-s}$ be a local frame for $\mathcal{F}$ over $A$. Then the map $\sigma \colon A\times \mathbb{R}^{m-s} \to \mathbb{R}^{m+n}$ defined by
\begin{equation*}
\sigma(a,b^{1}, \dotsc, b^{m-s}) = \xi(a) + b^{1} X_{1}(a) + \dotsb + b^{m-s} X_{m-s}(a)
\end{equation*}
is a local parametrization of $M_{\mathcal{F}}$ in a neighborhood of $p$. Since the tangent space $T_{(a, 0, \dotsc, 0, b^{j}, 0, \dotsc, 0)} \sigma$ of $\sigma$ at $\sigma(a, 0, \dotsc, 0, b^{j}, 0, \dotsc, 0)$ is spanned by
\begin{equation*}
\partial_{1} \xi(a) + b^{j} \partial_{1} X_{j}(a), \dotsc, \partial_{s} \xi(a) + b^{j} \partial_{s} X_{j}(a) , X_{1}(a), \dotsc, X_{m-s}(a),
\end{equation*}
it follows that
\begin{equation*}
T_{(a, 0, \dotsc, 0, b^{j}, 0, \dotsc, 0)} \sigma = T_{(a,0)}\sigma = \Span \mleft( \partial_{1} \xi(a), \dotsc, \partial_{s} \xi(a), X_{1}(a), \dotsc, X_{m-s}(a)\mright)
\end{equation*}
if and only if $\partial_{i} X_{j}(a) \in T_{(a,0)}\sigma$ for every $i = 1, \dotsc, s$; by linearity, this is equivalent to all directional derivatives of $X_{j}$ being in $T_{(a,0)}\sigma$ at $a$. Now, to complete the proof, it is enough to note that any of the elements of the frame $(X_{j})_{j=1}^{m-s}$ may be chosen to coincide with $x$ at $p$.
\end{proof}

\begin{remark}
It is easy to see, using orthogonality of $T_{p}M_{\mathcal{F}}$ and $(T_{p}M_{\mathcal{F}})^{\perp}$, that the condition $\nabla_{v}X \in T_{p}M_{\mathcal{F}}$ in Lemma~\ref{lm:criterion} only depends on $X$ through $X_{p}=x \in T_{p}M_{\mathcal{F}}$. Hence the lemma continues to hold if ``local section of $\mathcal{F}$'' is replaced by ``local section of $TM_{\mathcal{F}}$''.
\end{remark}

\begin{remark}
By the properties of the covariant derivative, Lemma~\ref{lm:criterion} implies that the tangent space of $M_{\mathcal{F}}$ is constant along every $x \in \mathcal{F}$ if and only if 
\begin{equation*}
\nabla_{v_{i}}X_{j} \in T_{p}M_{\mathcal{F}} \quad \forall \+ i, j,
\end{equation*}
where $(v_{1}, \dotsc, v_{s})$ is a basis of $T_{p}S$ and $(X_{1}, \dotsc, X_{m-s})$ a local frame for $\mathcal{F}$ around $p$. 	
\end{remark}

\section{Proof of the main result}\label{sec:proof}
In this section we prove Theorem~\ref{thm:GCPSolution}. The proof of the first two items is based on the following lemma.

\begin{lemma}\label{lm:intersection}
Suppose that there exists a section $N^{\ast}$ of $\mathcal{D}^{\perp}$ such that the shape operator $A^{\ast}$ of $S$ in direction $N^{\ast}$ is nonsingular at $p$. Then
\begin{equation*}
T_{p}S \cap  \phi_{p}( T_{p}S, N^{\ast}_{p} )^{\perp} = \{0\}.
\end{equation*}
\end{lemma}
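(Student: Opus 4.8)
The plan is to reduce the orthogonality condition, through the definition of $\phi_{p}$, to a statement purely about the shape operator $A^{\ast}$, whose nonsingularity then forces the intersection to be trivial. Concretely, I would take an arbitrary $w$ in the intersection, so that $w \in T_{p}S$ and $\langle w, \phi_{p}(v, N^{\ast}_{p})\rangle = 0$ for every $v \in T_{p}S$, and show that necessarily $w = 0$.

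The first step is to observe that the tangential projection $\pi^{\top}$ occurring in the definition of $\phi_{p}$ is harmless when paired against $w$. Indeed, since $w \in T_{p}S \subset \mathcal{D}_{p}$ and $\pi^{\top}$ is the orthogonal projection onto $\mathcal{D}_{p}$, the $\mathcal{D}_{p}^{\perp}$-component of $\nabla_{v} N^{\ast}$ is orthogonal to $w$, so that
\[
\langle w, \phi_{p}(v, N^{\ast}_{p})\rangle = \langle w, \pi^{\top}\nabla_{v} N^{\ast}\rangle = \langle w, \nabla_{v} N^{\ast}\rangle.
\]
Next I would bring in the shape operator. Because $N^{\ast}$ is a section of $\mathcal{D}^{\perp} \subset (TS)^{\perp}$, it is normal to $S$, and the Weingarten relation gives $\langle \nabla_{v} N^{\ast}, w\rangle = -\langle A^{\ast} v, w\rangle$ for $w \in T_{p}S$. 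Using the self-adjointness of $A^{\ast}$, the vanishing condition becomes
\[
0 = \langle w, \phi_{p}(v, N^{\ast}_{p})\rangle = -\langle A^{\ast} v, w\rangle = -\langle v, A^{\ast} w\rangle \qquad \forall \+ v \in T_{p}S.
\]

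To finish, since $A^{\ast} w \in T_{p}S$, I would choose $v = A^{\ast} w$ to get $\lvert A^{\ast} w\rvert^{2} = 0$, hence $A^{\ast} w = 0$; the hypothesis that $A^{\ast}$ is nonsingular at $p$ then yields $w = 0$. The argument is short, and its only substantive point—the step I would flag as the crux—is the passage from $\phi_{p}$ to $A^{\ast}$: one must check both that dropping $\pi^{\top}$ is legitimate (this uses $w \in \mathcal{D}_{p}$) and that pairing the result against $w \in T_{p}S$ recovers exactly the Weingarten map $-A^{\ast} v$ rather than some larger part of $\nabla_{v} N^{\ast}$. Everything else is a formal consequence of the symmetry and invertibility of $A^{\ast}$.
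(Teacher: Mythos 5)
Your proof is correct and follows essentially the same route as the paper: both reduce the orthogonality condition $\langle w, \phi_{p}(v,N^{\ast})\rangle = 0$ to the vanishing of $A^{\ast}$ on the intersection vector (via self-adjointness) and then invoke nonsingularity of $A^{\ast}$. The only cosmetic differences are that the paper argues by contradiction using the identity $A_{N} = \pi_{S}\circ\phi(\cdot,N)$ directly, and that it implicitly adopts the sign convention $A^{\ast} = +\pi_{S}\nabla N^{\ast}$ rather than the Weingarten sign $-\pi_{S}\nabla N^{\ast}$ you use, neither of which affects the argument.
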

\begin{proof}
Let $\pi_{S}$ denote orthogonal projection onto $TS$, so that
\begin{equation*}
A_{N} = \pi_{S} \circ \phi (\cdot, N).
\end{equation*}
Suppose, towards a contradiction, that there exists a \emph{nonzero} vector $v \in T_{p}S$ such that 
\begin{equation*}
\langle v, \phi(w, N^{\ast}) \rangle = 0 \quad \forall \+ w \in T_{p}S.
\end{equation*}	
Since $\phi (\cdot, N)$ is self-adjoint with respect to the metric, this implies 
\begin{equation*}
A^{\ast}(v) = 0,
\end{equation*}	
contradicting the assumption that $A^{\ast}$ has full rank at $p$.
\end{proof}

\begin{proof}[Proof of Theorem~\textup{\ref{thm:GCPSolution}\ref{item1}--\ref{item2}}]
According to Lemmas \ref{lm:construction} and \ref{lm:criterion}, we need to find an $(m-s)$-dimensional subspace $\mathcal{F}_{p}$ of $\mathcal{D}_{p}$ satisfying $T_{p}S\cap \mathcal{F}_{p}= \{0\}$ and 
\begin{equation*}
\nabla_{v} x \in \mathcal{D}_{p} \quad \forall \+ (v,x) \in T_{p}S\times \mathcal{F}_{p} ,
\end{equation*}
where $x$ is being extended arbitrarily to a local section $X$ of $\mathcal{D}$. Note that the latter condition is equivalent to
\begin{equation}\label{eq:condition}
\mleft\langle \nabla_{v} x, n \mright\rangle =0 \quad \forall \+ (v,x,n) \in T_{p}S\times \mathcal{F}_{p} \times\mathcal{D}_{p}^{\perp};
\end{equation}
using orthogonality of $x$ and $n$, we can rewrite \eqref{eq:condition} as
\begin{equation*}
\mleft\langle x, \phi_{p}(v,n) \mright\rangle =0 \quad  \forall \+ (v,x,n) \in T_{p}S\times \mathcal{F}_{p} \times\mathcal{D}_{p}^{\perp}
\end{equation*}
or, more succinctly, as 
\begin{equation*}
\mathcal{F}_{p} \subset (\Ima \phi_{p})^{\perp}.
\end{equation*}

Suppose that there exists a section $N^{\ast}$ of $\mathcal{D}^{\perp}$ such that the shape operator $A^{\ast}$ of $S$ in direction $N^{\ast}$ is nonsingular at $p$, and set
\begin{equation*}
\mathcal{F}_{p}^{\ast} \coloneqq \mathcal{D}_{p}\cap\phi_{p}( T_{p}S, N^{\ast}_{p} )^{\perp}.
\end{equation*}
Then, since $A_{N^{\ast}} = \pi_{S} \circ \phi (\cdot, N^{\ast})$, we deduce that $\mathcal{F}_{p}^{\ast}$ is a subspace of dimension $m-s$; besides, it has trivial intersection with $T_{p}S$ by Lemma~\ref{lm:intersection}. Observing that 
\begin{equation*}
(\Ima \phi_{p})^{\perp} \subset \phi_{p}( T_{p}S, N^{\ast}_{p} )^{\perp},
\end{equation*}
we conclude that $\mathcal{F}^{\ast}$ can only define a solution of our problem when $\Ima \phi_{p} = \phi_{p}( T_{p}S, N^{\ast}_{p})$.

Next we check that $\mathcal{F}^{\ast}$ does indeed define a solution. To this end, let $M^{\ast} \coloneqq M_{\mathcal{F}^{\ast}}$ be an embedded submanifold containing $S$, whose existence is guaranteed by Lemma~\ref{lm:construction}. 
First note that, by construction, the shape operator $A^{\ast}$ of $S$ is nothing but the restriction to $TS$ of a shape operator of $M^{\ast}$. Hence, as the tangent space is constant along $\mathcal{F}^{\ast}$, the index of relative nullity of $M^{\ast}$ equals $m-s$ along $S$. By the same reason, extending $N^{\ast}$ via parallel translation in the ambient space to a unit normal vector field along $M^{\ast}$, we have that
\begin{equation*}
A^{\ast}_{p+x}(v) = A^{\ast}_{p}(v) \quad \forall \+ v \in \mathcal{D}_{p} \equiv T_{p+x}M^{\ast},
\end{equation*}
which implies that the index of relative nullity of $M^{\ast}$ is constant, as claimed.
\end{proof}

It remains to prove Theorem~\ref{thm:GCPSolution}\ref{item3}. To set the stage, we first recall the notion of ($(m-1)$-fold) vector cross product.

Let $V$ be an $m$-dimensional oriented inner product space, and let $(e_{1}, \dotsc, e_{m})$ be an orthonormal basis of $V$. A \textit{vector cross product on $V$} is a multilinear map $\cdot \times \dotsb \times \cdot \colon V^{m-1} \to V$ such that
\begin{align*}
&\langle e_{1}\times \dotsb \times e_{m-1}, e_{1} \rangle = \dotsb = \langle e_{1}\times \dotsb \times e_{m-1}, e_{m-1} \rangle =0,\\
&\langle e_{1}\times \dotsb \times e_{m-1}, e_{1}\times \dotsb \times e_{m-1} \rangle = \det(\langle e_{i}, e_{j} \rangle)_{i,j=1}^{m-1},\\
&(e_{1}, \dotsc, e_{m-1}, e_{1} \times\dotsb\times e_{m-1}) \text{ is positively oriented}.
\end{align*}
It is well known that there is only one vector cross product on $V$. It is given by
\begin{equation*}
v_{1}\times \dotsb \times v_{m-1}= \star(v_{1}\wedge \dotsb \wedge v_{m-1}),
\end{equation*}
where $\star$ denotes the Hodge star operator and $\wedge$ the wedge product on $V$; see \cite{brown1967}. More generally, if $v_{1}, \dotsc,v_{m-1}$ are continuous sections of a smooth oriented vector bundle over a smooth manifold, then both $\star$ and $\wedge$ are globally defined, and the previous equation defines their cross product.

Our proof of Theorem~\ref{thm:GCPSolution}\ref{item3} is based on the following lemma.

\begin{lemma}\label{lm:coim}
Let $W$ be an $s$-dimensional subspace of $V$, and let $\rho$ be a linear map $W \to V$; without loss of generality, we may assume that the first $s$ elements of $(e_{1}, \dotsc, e_{m})$ span $W$ and the first $r$ elements span $\coim\rho=(\ker\rho)^{\perp}$. Suppose that the orthogonal complement $\varphi(W)^{\perp}$ of $\varphi(W)$ in $V$ has trivial intersection with $\coim\rho$. Then 
\begin{equation*}
x_{j} = \varphi(e_{1}) \times \dotsb \times \varphi(e_{r})\times e_{r+1} \times \dotsb \times \widehat{e_{r+j}}\times \dotsb \times e_{m},  \quad j =1, \dotsc, m-r
\end{equation*}
defines an orthogonal basis of $\varphi(W)^{\perp}$; here the hat indicates that $e_{r+j}$ is omitted.
\end{lemma}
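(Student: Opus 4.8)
The plan is to verify separately the two properties defining a basis of $\varphi(W)^{\perp}$: that each $x_{j}$ belongs to $\varphi(W)^{\perp}$, and that $x_{1}, \dotsc, x_{m-r}$ are linearly independent. Since $\dim \varphi(W)^{\perp} = m-r$ and there are exactly $m-r$ vectors $x_{j}$, these two facts together give the conclusion. Throughout I will use the determinant characterization of the cross product: $\langle u, v_{1} \times \dotsb \times v_{m-1}\rangle$ is the determinant of the matrix whose columns are the coordinates of $v_{1}, \dotsc, v_{m-1}, u$ with respect to $(e_{1}, \dotsc, e_{m})$; in particular a cross product is orthogonal to each of its factors. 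The membership $x_{j} \in \varphi(W)^{\perp}$ is then immediate: since $e_{r+1}, \dotsc, e_{s}$ span $\ker \rho$, the space $\varphi(W)$ is spanned by $\varphi(e_{1}), \dotsc, \varphi(e_{r})$, and these vectors occur among the factors of every $x_{j}$, so $x_{j}$ is orthogonal to all of $\varphi(W)$.

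For the independence I would compute the coordinates of $x_{j}$ along $e_{r+1}, \dotsc, e_{m}$. By the determinant characterization, $\langle x_{j}, e_{r+k}\rangle$ equals the determinant of the matrix with columns $\varphi(e_{1}), \dotsc, \varphi(e_{r}), e_{r+1}, \dotsc, \widehat{e_{r+j}}, \dotsc, e_{m}, e_{r+k}$. For $k \neq j$ the column $e_{r+k}$ is repeated and the determinant vanishes; for $k = j$, moving $e_{r+j}$ back to its natural slot identifies the determinant, up to sign, with $\det[\varphi(e_{1}), \dotsc, \varphi(e_{r}), e_{r+1}, \dotsc, e_{m}]$. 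The final $m-r$ columns of the latter are standard basis vectors that vanish in the first $r$ rows, so a Laplace expansion along them collapses it to the $r \times r$ minor $\det(\langle e_{a}, \varphi(e_{b})\rangle)_{a,b=1}^{r}$.

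The heart of the argument is to show this minor is nonzero, and this is exactly where the hypothesis enters. The matrix $(\langle e_{a}, \varphi(e_{b})\rangle)_{a,b=1}^{r}$ represents the composite of $\varphi$ with orthogonal projection onto $\coim \rho = \Span(e_{1}, \dotsc, e_{r})$, regarded as an endomorphism of $\coim \rho$. Because $\varphi$ is injective on $\coim \rho$, this endomorphism is singular if and only if some nonzero vector of $\varphi(W)$ lies in $(\coim \rho)^{\perp}$, that is, $\varphi(W) \cap (\coim \rho)^{\perp} \neq \{0\}$. As $\varphi(W)$ and $\coim \rho$ both have dimension $r$, passing to orthogonal complements rewrites this as $\varphi(W)^{\perp} \cap \coim \rho \neq \{0\}$, which the hypothesis forbids; hence the minor is nonzero.

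It then follows that $\langle x_{j}, e_{r+k}\rangle = 0$ for $k \neq j$ while $\langle x_{j}, e_{r+j}\rangle \neq 0$, so the orthogonal projections of $x_{1}, \dotsc, x_{m-r}$ onto $\Span(e_{r+1}, \dotsc, e_{m})$ are nonzero multiples of $e_{r+1}, \dotsc, e_{m}$ and hence linearly independent; therefore the $x_{j}$ themselves are independent, and being $m-r$ independent vectors in the $(m-r)$-dimensional space $\varphi(W)^{\perp}$, they form a basis. I expect the only genuine obstacle to be the translation carried out in the third paragraph — recognizing that nonvanishing of the $r\times r$ minor is equivalent, through orthogonal complements and the equality $\dim \varphi(W) = \dim \coim \rho = r$, to the stated transversality condition; everything else is routine bookkeeping with the determinant formula.
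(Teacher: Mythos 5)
Your argument is correct and in fact more careful than the paper's own proof, but it follows a genuinely different route. The paper argues structurally: each $x_{j}$ is orthogonal to all of its factors, hence lies in the (at most one-dimensional) intersection $\varphi(W)^{\perp}\cap\Span(e_{1},\dotsc,e_{r},e_{r+j})$, and the hypothesis enters only through the trivial-intersection statement $\varphi(W)^{\perp}\cap\Span(e_{1},\dotsc,e_{r})=\{0\}$; the nonvanishing of each $x_{j}$ is left implicit. You instead compute the coordinates $\langle x_{j},e_{r+k}\rangle$ from the determinant identity and reduce everything to the nonvanishing of the minor $\det(\langle e_{a},\varphi(e_{b})\rangle)_{a,b=1}^{r}$, which you correctly translate into the transversality hypothesis by passing to orthogonal complements using $\dim\varphi(W)=\dim\coim\rho=r$. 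This buys an explicit verification that $x_{j}\neq 0$ --- the point the paper glosses over --- at the cost of some bookkeeping; both routes are legitimate.

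One caveat: you prove that $(x_{1},\dotsc,x_{m-r})$ is a \emph{basis} of $\varphi(W)^{\perp}$, not an \emph{orthogonal} basis, and you should know that the orthogonality asserted in the statement (and justified in the paper by the claim that the $x_{j}$ ``lie in orthogonal subspaces'') is false in general: the subspaces $\varphi(W)^{\perp}\cap\Span(e_{1},\dotsc,e_{r},e_{r+j})$ share components along $e_{1},\dotsc,e_{r}$ and need not be pairwise orthogonal. Concretely, with $m=3$, $r=s=1$ and $\varphi(e_{1})=e_{1}+e_{2}+e_{3}$, one gets $x_{1}=\varphi(e_{1})\times e_{3}=(1,-1,0)$ and $x_{2}=\varphi(e_{1})\times e_{2}=(-1,0,1)$, whose inner product is $-1$; equivalently, the $s=1$ frame $X_{j}=\phi^{1}E_{1+j}-\phi^{1+j}E_{1}$ of the main theorem satisfies $\langle X_{j},X_{k}\rangle=\phi^{1+j}\phi^{1+k}$ for $j\neq k$. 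Since only the fact that the $x_{j}$ form a frame is used downstream, your proof delivers everything that is actually needed; the discrepancy with the literal statement is a defect of the statement (the word ``orthogonal'' should be dropped), not of your argument.
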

\begin{proof}
By construction, the vector $x_{j}$ belongs to the intersection between $\varphi(W)^{\perp}$ and the subspace spanned by $e_{1}, \dotsc, e_{r},e_{r+j}$. Since $\varphi(W)^{\perp}$ has trivial intersection with $\mathrm{span}(e_{1}, \dotsc, e_{r})$, it follows that $x_{1}, \dotsc, x_{m-r}$ all lie in orthogonal subspaces, showing that they indeed span $\varphi(W)^{\perp}$.
\end{proof}

\begin{proof}[Proof of Theorem~\textup{\ref{thm:GCPSolution}\ref{item3}}]
Let  $(E_{1}, \dotsc, E_{m})$ be a smooth orthonormal frame for $\mathcal{D}\rvert_{\xi(A)}$ whose first $s$ elements span $TS$. Equipping $\mathcal{D}\rvert_{\xi(A)}$ with the orientation naturally induced by that frame, the vector cross product becomes well defined on $\mathcal{D}\rvert_{\xi(A)}$. Then, for each $j = 1, \dotsc, m-s$, let 
\begin{equation}\label{eq:CrossProduct}
X_{j} = \phi(E_{1},N^{\ast}) \times \dotsb\times \phi(E_{s},N^{\ast}) \times E_{s+1} \times \dotsb \times \widehat{E_{s+j}} \times \dotsb \times E_{m},
\end{equation}
where the hat indicates that $E_{s+j}$ is omitted. Since $\coim \phi = TS$ by assumption, we deduce from Lemmas \ref{lm:intersection} and \ref{lm:coim} that $(X_{1}, \dotsc, X_{m-s})$ is an orthogonal frame for $\phi(TS, N^{\ast})^{\perp}$.

It remains to compute \eqref{eq:CrossProduct} in coordinates. Let $\phi_{i}^{k} = \langle \phi(E_{i}, N^{\ast}), E_{k})$. Then 
\begin{equation*}
\phi(E_{i},N^{\ast}) = \phi_{i}^{1}E_{1} + \dotsb + \phi_{i}^{m}E_{m}.
\end{equation*}
Substitution into \eqref{eq:CrossProduct} gives
\begin{align*}
X_{j} &= \bigl(\phi_{1}^{1}E_{1} + \dotsb + \phi_{1}^{s}E_{s}  + \phi_{1}^{s+j}E_{s+j}\bigr)\times \dotsb\times \left(\phi_{s}^{1}E_{1} + \dotsb + \phi_{s}^{s}E_{s}  + \phi_{s}^{s+j}E_{s+j}\right)\\ 
& \quad \times E_{s+1} \times \dotsb \times \widehat{E_{s+j}} \times \dotsb \times E_{m},	
\end{align*}
which implies \eqref{eq:Xj}.
\end{proof}

\bibliographystyle{amsplain}
\bibliography{biblio}
\end{document}